\newtheorem{theorem}{Theorem}
\newtheorem{corollary}[theorem]{Corollary}
\newtheorem{proposition}[theorem]{Proposition}
\newtheorem{question}{Question}
\newcommand{\barva}[1]{{\color{black}#1}}
\theoremstyle{definition}
\newcommand{\gmt}{\gamma_t}
\newcommand{\GcH}{G\Box H}
\newcommand{\GcK}{G\Box K_2}
\newcommand{\gmtG}{\gamma_t(G)}
\newcommand{\gmtH}{\gamma_t(H)}
\newcommand{\gmtGH}{\gamma_t(\GcH)}
\newcommand{\gmtGK}{\gamma_t(\GcK)}
\newcommand{\oneN}{\{ 1,\ldots,n \}}
\newcommand{\iOneN}{i \in \oneN}
\title{On total domination in the Cartesian product of graphs}
\author{
Bo\v{s}tjan Bre\v{s}ar$^{a,b}$\thanks{Email: \texttt{bostjan.bresar@um.si}}
\and
Tatiana Romina Hartinger$^{c,d}$\thanks{Email: \texttt{tatiana.hartinger@iam.upr.si}}
\and
Tim Kos$^{b}$\thanks{Email: \texttt{tim.kos@imfm.si}}
\and
Martin Milani\v c$^{c,d}$\thanks{Email: \texttt{martin.milanic@upr.si}}
}
\begin{document}
\maketitle

\begin{center}
$^a$ Faculty of Natural Sciences and Mathematics, University of Maribor, Slovenia\\

$^b$ Institute of Mathematics, Physics and Mechanics, Ljubljana, Slovenia\\

$^c$ University of Primorska, UP IAM, Muzejski trg 2, SI 6000 Koper, Slovenia\\

$^d$ University of Primorska, UP FAMNIT, Glagolja\v ska 8, SI 6000 Koper, Slovenia \\
\end{center}

\bigskip

\begin{abstract}
Ho proved in [A note on the total domination number, Util.~Math.~77 (2008) 97–-100] that the total domination number of the Cartesian product
of any two graphs without isolated vertices is at least one half of the product of their total domination numbers.
We extend a result of Lu and Hou from~[Total domination in the Cartesian product of a graph and $K_2$ or $C_n$, Util.~Math.~83 (2010) 313–-322] by characterizing the pairs of graphs $G$ and $H$ for which $\gmtGH=\frac{1}{2}\gmtG \gmt (H)\,$, whenever $\gmtH=2$. In addition, we present an infinite family of graphs $G_n$ with $\gmt (G_n)=2n$, which asymptotically approximate the equality in \hbox{$\gmt(G_n\Box G_n)\ge \frac{1}{2}\gmt(G_n)^2$}.
 \end{abstract}

\noindent
{\bf Keywords:} total domination, Cartesian product, total domination quotient  \\

\noindent
{\bf AMS subject classification (2010)}: 05C69

\section{Introduction}

\barva{All graphs considered in this paper are finite, simple, and undirected. A {\em dominating set} of a graph $G$ is a set $D \subseteq V(G)$ such that every vertex not in $D$ is adjacent to at least one vertex from $D$. The {\em domination number} of $G$, denoted by $\gamma(G)$, is the minimum cardinality of a dominating set of $G$. A {\em total dominating set}, abbreviated a TD-set, of a graph $G$ with no isolated vertices, is a set $S$ of vertices of $G$ such that every vertex in $G$ is adjacent to a vertex from $S$. The {\em total domination number} of $G$, denoted by $\gmtG$, is the minimum cardinality of a TD-set of $G$. A TD-set of $G$ of cardinality $\gmtG$ will be referred to as a $\gmtG$-set. Given graphs $G$ and $H$, the {\em Cartesian product} $\GcH$ is the graph with the vertex set $V(G)\times V(H)$ in which two vertices $(u_1,v_1)$ and $(u_2,v_2)$ are adjacent if and only if either $u_1 = u_2$ and $v_1v_2 \in E(H)$ or $v_1 = v_2$ and $u_1u_2 \in E(G)$.}

Domination parameters in graph products have been given a lot of attention, which is largely due to the intriguing, long-lasting Vizing's conjecture on the domination number of the Cartesian products of graphs, see a recent survey~\cite{brdo-2012}. A related question on the total domination number of the Cartesian product of graphs was posed by Henning and Rall in~\cite{hr-2005}, asking whether the product of the total domination numbers of two graphs without isolated vertices is bounded above by twice the total domination number of their Cartesian product. The question was answered in the positive by Pak Tung Ho.

\begin{theorem}[Ho~\cite{ho_note}]
	\label{thm:ho_ineq}
	For graphs $G$ and $H$ without isolated vertices,
	$$\gmtG \gmt (H) \leq 2 \gmtGH\,.$$
\end{theorem}

The bound in Theorem \ref{thm:ho_ineq} is sharp as may be seen by taking $G=K_2$ and $H=K_n$. However it remains an open problem to characterize pairs of graphs $G$ and $H$ that achieve equality in the bound of Theorem~\ref{thm:ho_ineq}.

Henning and Rall characterized in~\cite{hr-2005} the trees $G$ such that $\gmtG\gmt (H) = 2\gmtGH$ holds for some graph $H$, while Lu and Hou did the same for the cycles~\cite{lu_hou_k2}. In all these cases the other factor $H$ is $kK_2$, i.e., a graph obtained by taking the disjoint union of $k$ copies of $K_2$. In the same paper Lu and Hou characterized the class of graphs $G$ with $\gmtG = \gmtGK$ \cite{lu_hou_k2}.
\barva{To explain their result, we need to introduce some more notation. The {\em neighborhood} of a vertex $v\in V(G)$ is the set $N_G(v)=\{u\in V(G)\,:\,uv\in E(G)\}$, while {\em neighborhood of a set} $X\subset V(G)$ is defined as $N_G(X)= \bigcup_{v\in X}N_G(v)$. (Hence, $X$ is a TD-set of $G$ if and only if $N_G(X)=V(G)$.) Given a graph $G$ and a set $X\subseteq V(G)$, the {\em subgraph of $G$ induced by $X$} is the graph denoted by $G[X]$ with vertex set $X$ and edge set $\{uv\in E(G): u,v\in X\}$.}
Lu and Hou defined the following families of graphs:
\begin{itemize}
  \item ${\mathcal{F}}_1$: the graphs $G$ such that $\gmtG = 2 \gamma (G)$,
  \item ${\mathcal{F}}_2$: the graphs $G$ that have a $\gmtG$-set $D$ that can be partitioned into two nonempty subsets $D_1$ and $D_2$ such that $D_1 = V(G) \setminus N_G(D_2)$ and $D_2 = V(G) \setminus N_G(D_1)$, and
  \item $\mathcal{F}_3$: the graphs $G$ whose vertex set $V(G)$ can be partitioned into two nonempty subsets $V_1$ and $V_2$ such that $G_1 = G[V_1] \in \mathcal{F}_1$, $G_2 = G[V_2] \in \mathcal{F}_2$, and $\gmtG = \gmt(G_1) + \gmt(G_2)$.
\end{itemize}

\barva{It is well-known and easy to see that for any graph $G$ with no isolated vertices, $\gamma_t(G)\le 2\gamma(G)$. Therefore, family ${\mathcal{F}}_1$
consists precisely of the graphs attaining equality in the above inequality.
The problem of characterizing the graphs of family ${\mathcal{F}}_1$ is open in general, see e.g.~\cite[Problem 4]{henning09}, while a constructive characterization of trees $T$ such that $\gamma_t(T) = 2\gamma(T)$ was given in~\cite{henning01}, see also~\cite{henning_book}.} In Fig.~\ref{fig:examples}, three separating examples for these classes are exhibited.

\begin{figure}[!h]
\begin{center}
\includegraphics[scale=0.75]{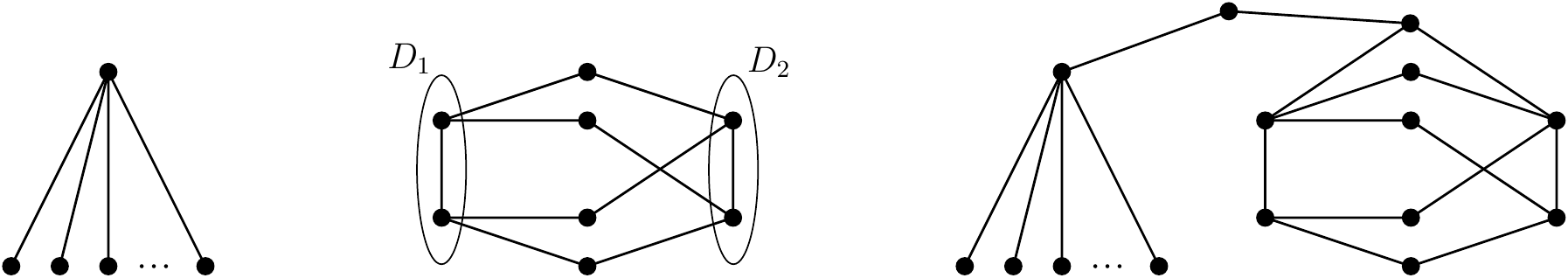}
\caption{A graph in ${\mathcal{F}}_1 \setminus ({\mathcal{F}}_2 \cup {\mathcal{F}}_3)$, a graph in ${\mathcal{F}}_2 \setminus ({\mathcal{F}}_1 \cup {\mathcal{F}}_3)$, and a graph in ${\mathcal{F}}_3 \setminus ({\mathcal{F}}_1 \cup {\mathcal{F}}_2)$, respectively.}
\label{fig:examples}
\end{center}
\end{figure}

\begin{theorem}[Lu and Hou \cite{lu_hou_k2}]
	\label{thm:lu_hou_k2}
	If $G$ is a graph without isolated vertices, then
	$\gmtG = \gmtGK$ if and only if
	$G \in \mathcal{F}_1 \cup \mathcal{F}_2\cup \mathcal{F}_3$.
\end{theorem}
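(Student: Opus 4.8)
The plan is to reduce the statement to a purely graph-theoretic question about pairs of vertex subsets of $G$, settle the easy ``if'' direction by three explicit constructions, and extract the families in the ``only if'' direction by a structural analysis of an optimal configuration. First I would fix the basic correspondence. Write the two copies of $G$ inside $\GcK$ as layers $G^1,G^2$ joined by the perfect matching pairing $v^1$ with $v^2$, and for $S\subseteq V(\GcK)$ let $S_1,S_2\subseteq V(G)$ be the projections onto the two layers, so $|S|=|S_1|+|S_2|$. Unwinding the adjacencies of $\GcK$, the vertex $v^1$ is totally dominated by $S$ exactly when $v\in S_2$ or $v$ has a $G$-neighbour in $S_1$, and symmetrically for $v^2$; hence $S$ is a TD-set of $\GcK$ if and only if
$$N_G(S_1)\cup S_2=V(G)\qquad\text{and}\qquad N_G(S_2)\cup S_1=V(G)\,.$$
By Theorem~\ref{thm:ho_ineq} with $H=K_2$ we have $\gmtG\le\gmtGK$, so $\gmtG=\gmtGK$ holds precisely when some pair $(S_1,S_2)$ satisfying the two conditions has $|S_1|+|S_2|=\gmtG$, which is what the whole argument must characterize.

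For the ``if'' direction I would exhibit such an optimal pair for each family. If $G\in\mathcal{F}_1$, take a minimum dominating set $D$ and put $S_1=S_2=D$; then $N_G(D)\cup D=V(G)$ gives both conditions and $|S_1|+|S_2|=2\gamma(G)=\gmtG$. If $G\in\mathcal{F}_2$, take the guaranteed $\gmtG$-set $D=D_1\cup D_2$ and put $S_1=D_1$, $S_2=D_2$; the defining equalities $D_1=V(G)\setminus N_G(D_2)$ and $D_2=V(G)\setminus N_G(D_1)$ are exactly the two covering conditions, and $|S_1|+|S_2|=|D|=\gmtG$. If $G\in\mathcal{F}_3$, I would combine the previous two pairs coordinatewise over $V_1$ and $V_2$; the conditions hold inside each $G[V_i]$ and therefore in $G$, and additivity $\gmtG=\gmt(G[V_1])+\gmt(G[V_2])$ yields the required cardinality $2\gamma(G[V_1])+\gmt(G[V_2])=\gmtG$.

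The ``only if'' direction is the substantive part. Starting from an optimal pair $(S_1,S_2)$, I would record the consequences of the two conditions: $V(G)\setminus N_G(S_2)\subseteq S_1$, its symmetric counterpart, and — writing $T=S_1\cup S_2$ and $A=S_1\cap S_2$ — the fact that every vertex with no neighbour in $T$ must lie in $A$. The set $A$ is the source of the ``doubling'' characteristic of $\mathcal{F}_1$, whereas the portion of $T$ outside $A$ carries the exact $\mathcal{F}_2$-type equalities. The extreme cases are immediate: when $S_1=S_2=D$ the first condition forces $D$ to be dominating, and $2|D|=\gmtG\le 2\gamma(G)\le 2|D|$ forces $\gmtG=2\gamma(G)$, so $G\in\mathcal{F}_1$; when $A=\emptyset$ and, in addition, there are no edges between $S_1$ and $S_2$, the covering conditions sharpen to the defining equalities of $\mathcal{F}_2$. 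The goal in general is to split $V(G)$ into a part $V_1$ governed by $A$ and a part $V_2$ carrying the remaining single coverage, so that $G[V_1]\in\mathcal{F}_1$, $G[V_2]\in\mathcal{F}_2$, and $\gmtG=\gmt(G[V_1])+\gmt(G[V_2])$, the degenerate partitions $V_1=V(G)$ and $V_2=V(G)$ returning pure membership in $\mathcal{F}_1$ and in $\mathcal{F}_2$.

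I expect the main obstacle to be producing this clean partition in the mixed case, since the covering conditions only give inclusions, not the exact equalities that $\mathcal{F}_2$ demands; already in the disjoint case an edge between $S_1$ and $S_2$ prevents $S_1=V(G)\setminus N_G(S_2)$ and signals that a genuine $\mathcal{F}_1$-part must be peeled off. I would therefore replace $(S_1,S_2)$ by an extremal optimal pair — for instance one minimising $|S_1\cap S_2|$ or $|T|$ — and use minimality to rule out wasteful vertices and to control edges crossing between $V_1$ and $V_2$. The delicate bookkeeping lies in handling the vertices of $A$ that have no neighbour in $T$, in showing that no crossing edge destroys the equalities inside $G[V_2]$ or the relation $\gmt(G[V_1])=2\gamma(G[V_1])$, and in verifying additivity of $\gmt$ across the partition; these are precisely the steps where the definitions of $\mathcal{F}_1$, $\mathcal{F}_2$, and $\mathcal{F}_3$ must be matched exactly.
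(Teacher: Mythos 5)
Your reduction to pairs $(S_1,S_2)$ with $N_G(S_1)\cup S_2=V(G)$, $N_G(S_2)\cup S_1=V(G)$ and $|S_1|+|S_2|=\gmtG$ is correct, and your ``if'' direction coincides with the constructions the paper gives (inside the proof of its Theorem~3). The extreme case $S_1=S_2$ is also handled correctly. But the ``only if'' direction, which is the substance of the theorem, is not actually proved: you state the intended partition and then list the difficulties (peeling off the $\mathcal{F}_1$-part, controlling crossing edges, verifying additivity of $\gmt$) without resolving any of them. Even your ``immediate'' $\mathcal{F}_2$ case is conditional: you assume, as an extra hypothesis, that there are no edges between $S_1$ and $S_2$ when $S_1\cap S_2=\emptyset$, whereas this is precisely something that must be \emph{derived} from optimality of the pair. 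Simply passing to an extremal optimal pair (minimising $|S_1\cap S_2|$ or $|T|$) is not known to produce the required structure; the needed facts are private-neighbour consequences of minimality, not consequences of an extremality choice.

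Concretely, what is missing is the content of the paper's Proposition~3 specialised to $H=K_2$. Writing $A=S_1\cap S_2$ (the paper's $S'$) and $S''=(S_1\setminus S_2)\cup(S_2\setminus S_1)$, one must show: $\gmtG=2|A|+|S''|$; every vertex added to totally dominate $A$ has a unique private neighbour in $A$, whence $A$ is independent with pairwise disjoint neighbourhoods; there are no edges and no common neighbours between $A\cup N(A)$ and $S''\cup N(S'')$; and $S''$ induces a disjoint union of edges, each contained in a single layer, with each $S''$-vertex having its unique neighbour of $S''$ as its sole private neighbour. All of these follow from the fact that any violation lets you replace two vertices of the optimal set by one (or reuse an already-dominated vertex), contradicting $|S_1|+|S_2|=\gmtG$. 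Only with these facts in hand do you get the disjoint partition $V_1=A\cup N(A)$, $V_2=S''\cup N(S'')$ of $V(G)$ with $G[V_1]\in\mathcal{F}_1$, $G[V_2]\in\mathcal{F}_2$ and $\gmtG=\gmt(G[V_1])+\gmt(G[V_2])$, i.e.\ membership in $\mathcal{F}_1\cup\mathcal{F}_2\cup\mathcal{F}_3$. As it stands, your argument establishes the easy direction and a correct framework, but leaves the characterisation itself unproved.
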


In this paper we extend this result by showing that if $H$ is a connected graph with $\gamma_t(H)=2$ and $\gmtGH=\gmtG$ for some graph $G$, then, either $H$ is  isomorphic to $K_2$, and so $G$ is one of the graphs characterized in Theorem~\ref{thm:lu_hou_k2}, \barva{or $H$ is a graph with $\gamma(H)=1$, and $G$ is isomorphic to $K_2$} (see Section~\ref{sec:charK2}). In Section~\ref{sec:asymptotic} we continue the investigation of the existence of pairs $G$ and $H$ that achieve equality in the bound from Theorem~\ref{thm:ho_ineq}. While we found no other such pairs, we present a family of graphs $G_n$ \barva{with arbitrarily large total domination numbers} for which the ratio $\gamma_t(G_n\Box G_n)/(\gamma_t(G_n))^2$ is as close to $1/2$ as desired. \barva{Finally, in Section~\ref{sec:quot} we give several remarks on the {\em total domination quotient} $\gmtGH/(\gmtG \gmt(H))$ and propose a further study of this notion.} 

We conclude this section by giving some definitions and notation used in the rest of the paper.
\barva{If $X$ and $Y$ are subsets of vertices in $G$, then $X$ {\em totally dominates} $Y$ in $G$ if $Y\subseteq N_G(X)$. Similarly, we say that a vertex $x$  {\em totally dominates} a vertex $y$ if $xy\in E(G)$. Given a set $X\subseteq V(G)$ and a vertex $u\in X$, we define ${\it pn}_G(u,X)$ as the set $\{w\in V(G)\,:\, N_G(w)\cap X = \{u\}\}$. A member of the set ${\it pn}_G(u,X)$ is said to be an {\em $X$-private neighbor of $u$ in $G$}. The indices in the notions defined throughout this section will be omitted when the graph to which they refer will be clear from the context. Given two graphs $G$ and $H$ and a} vertex $h\in V(H)$, the set $G^h=\{(g,h)\in V(G\Box H)\,:\, g\in V(G)\}$ is called {\em a $G$-fiber} in the Cartesian product of $G$ and $H$. For $g\in V(G)$, the $H$-\emph{fiber} $^g\!H$ is defined as $^g\!H=\{(g,h)\in V(G\Box H)\,:\,\ h\in V(H)\}$. We may consider $G$-fibers and $H$-fibers as induced subgraphs when appropriate. The {\em projection} to $G$ is the map $p_G : V(\GcH)\rightarrow V(G)$
defined by $p_G(g,h) = g$.

\section{\barva{Pairs of graphs with $\gmtH = 2$ attaining equality in Ho's bound}}
\label{sec:charK2}

By Theorem~\ref{thm:ho_ineq}, any two graphs $G$ and $H$ without isolated vertices satisfy $\gmtG \gmt (H) \leq 2 \gmtGH$. \barva{In this section, we characterize pairs of graphs $G$ and $H$ without isolated vertices and with $\gmt (H) = 2$ such that $\gmtG \gmt (H) = 2 \gmtGH$, or, equivalently,
pairs of graphs $G$ and $H$ without isolated vertices such that $\gmt (H) = 2$ and $\gmtG = \gmtGH$.
Note that since for any graph $H$ without isolated vertices we have $\gmt(H)\ge 2$, inequality $\gmtG \gmt (H) \leq 2 \gmtGH$ 
implies \begin{equation}
\label{e1}
\gmtG \leq \gmtGH
\end{equation}
\noindent for any two graphs $G$ and $H$ with no isolated vertices, and equality is possible only when $\gmt(H)=2$. 
Thus, we will in fact characterize the pairs of graphs that achieve equality in~\eqref{e1}.}

Let $G$ and $H$ be connected graphs such that $\gmt(H)=2$ and $\gmtG = \gmtGH$. Let $D$ be a $\gmtGH$-set, and
let $V(H)=\{h_1,\ldots,h_n\}$.
The set $D$ can be partitioned into two (possibly empty) subsets: $D' = \{ (g,h) \in D \ |\ \exists \ h' \in V(H)$ such that $h' \neq h$ and $(g,h') \in D \}$ and $D''= D \setminus D'$. Let $D''_i = \{ (g,h) \in D'' \ |\ h=h_i \}$ for every $\iOneN$. 

On the graph $G$ we define the sets \barva{$S=p_G(D)$}, $S' = p_G(D')$, $S'' = p_G(D'')$, \barva{$P=N_G(S)\setminus S$}, \barva{$P' = N_G(S')\setminus S'$}, $P'' = N_G(S'')\setminus S''$ and $S''_i = p_G(D''_i)$ for every $\iOneN$.

\begin{proposition}
	\label{thm:sets}
	Let $G$ and $H$ be connected graphs such that $\gmt(H)=2$ and $\gmtG = \gmtGH$.
Let $D, D', D'',$ $D''_i, S, S', S'', P, P', P''$ and $S''_i$ be the sets as defined above, where $\iOneN$. Then the following statements hold for $G$:
	\begin{enumerate}[{\bf (A)}]
		\item \label{thm:setsA} $S \cup P = V(G)$.
		\item \label{thm:setsB} $\gmtG = 2 |S'| + |S''|$.
		\item \barva{\label{thm:setsC} $S'$ is independent and no two vertices from $S'$ have a common neighbor.}
		\item \label{thm:setsD} There are no edges between $S'$ and $S''$.
		\item \label{thm:setsE} \barva{There exists a positive integer $k$ such that} $S''$ induces a $kK_2$.
	    \item \label{thm:setsE'} \barva{If $\barva{T'}$ is a minimum set of vertices that totally dominates $S'$, then $T' \subseteq P'$, $|\barva{T'}| = |S'|$, and $X = S' \cup \barva{T'} \cup S''$ is a $\gmtG$-set. In addition, for every vertex $g\in S''$, the set ${\it pn}_G(g,X)$ is a subset of $S''$ of size 1.}
		\item \label{thm:setsF} For every $\iOneN$, \barva{there exists a positive integer $k$ such that} $S''_i$ induces a $kK_2$.
	    \item \label{thm:setsG} For every $\iOneN$, $S''_i$ totally dominates $P'' \cup S''_i$.
	    \item \label{thm:setsH} For every $\iOneN$, no two vertices from $S''_i$ have a common neighbor.
	    \item \label{thm:setsI} No vertex from $S'$ has a common neighbor with a vertex from $S''$.
	    \item \label{thm:setsJ} $S', S'', P'$, and $P''$ are pairwise disjoint sets.
	   \item \label{thm:setsK} $\gmt (G[S' \cup P']) = 2 \gamma (G[S' \cup P' ]) \barva{= 2|S'|}$.
		\item \label{thm:setsL} $\gmt (G[S'' \cup P'']) = | S'' |$.
		\item \label{thm:setsM} If $H \neq K_2$, then $S' = \emptyset$.
\end{enumerate}
	
\end{proposition}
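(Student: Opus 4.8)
The plan is to read off a tight structure on $G$ from the $\gmtGH$-set $D$ by recording, for each $g\in V(G)$, the \emph{fibre slice} $D_g=\{h\in V(H):(g,h)\in D\}$, so that $g\in S'$ iff $|D_g|\ge 2$, $g\in S''$ iff $|D_g|=1$, and $g\notin S$ iff $D_g=\emptyset$; thus $|D|=\sum_{g}|D_g|=\sum_{g\in S'}|D_g|+|S''|\ge 2|S'|+|S''|$. The single fact driving everything is that $D$ totally dominates $\GcH$, which for each $g$ reads
\begin{equation*}
N_H(D_g)\cup\bigcup_{g'\in N_G(g)}D_{g'}=V(H).\tag{$\star$}
\end{equation*}
Since $\gmtH=2$, no single vertex of $H$ totally dominates $H$, so if $D_g=\emptyset$ then $(\star)$ forces a neighbour of $g$ in $S$, while if $|D_g|=1$ then the height of $g$ must reappear in a neighbouring slice. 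Statement (A) is then immediate: fixing any $h\in V(H)$, the vertex $(g,h)$ has a neighbour in $D$, which is either an $H$-neighbour (putting $g\in S$) or a $G$-neighbour (putting $g\in N_G(S)$), so $S\cup P=V(G)$.

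The core is a \emph{choice-function} construction that simultaneously yields (B), (C) and (D). For any map $t$ sending each $g\in S'$ to a neighbour $t(g)\in N_G(g)$ (which exists because $G$ is connected), set $T=\{t(g):g\in S'\}$ and $X_0=S\cup T$. Using $(\star)$ for the vertices of $S''$ and statement (A) for the vertices outside $S$ (the vertices of $S'$ are dominated by $T$, and those of $T$ by $S'$), one checks that $X_0$ is a TD-set of $G$, whence $2|S'|+|S''|\le|D|=\gmtG\le|X_0|\le|S|+|T|\le 2|S'|+|S''|$. Equality throughout forces $|D_g|=2$ for every $g\in S'$ (giving (B), $\gmtG=2|S'|+|S''|$), together with $|T|=|S'|$ and $T\cap S=\emptyset$, and these last two must hold for \emph{every} choice of $t$. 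Reading this back: if some $g\in S'$ had a neighbour in $S$ we could route $t(g)$ into $S$ and violate $T\cap S=\emptyset$, so $N_G(g)\cap S=\emptyset$, which gives both the independence of $S'$ (half of (C)) and the absence of $S'$--$S''$ edges (D); and if two vertices of $S'$ shared a neighbour $w$ we could send both to $w$ and violate $|T|=|S'|$, proving the "no common neighbour" half of (C). In particular each slice $D_g$ with $g\in S'$ is a $2$-element TD-set of $H$, since $N_G(g)\cap S=\emptyset$ makes the union in $(\star)$ collapse to $N_H(D_g)=V(H)$.

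With (C) and (D) in hand the remaining items split into two groups. The $S''$-structure: applying $(\star)$ to $g\in S''$ and using (D) to exclude $S'$-neighbours shows $g$ has a neighbour $g^\sharp\in S''$ of the same height, so $G[S''_i]$ has no isolated vertex; granting (H), a vertex of degree $\ge 2$ in $G[S''_i]$ would be a common neighbour of two of its vertices, so $G[S''_i]$ has maximum degree $1$ and is a $kK_2$ (this is (F), and, once cross-height edges inside $S''$ are excluded, (E)); statement (G) comes from dominating $(w,h_i)$ for $w\in P''$, where (D), (I) and (J) force the dominating neighbour into $S''_i$. The disjointness (J) is bookkeeping from (D) and (I) (for instance a vertex of $P'\cap P''$ would be a common neighbour of $S'$ and $S''$, contradicting (I)). For the assembly (E'): with $T'\subseteq P'$ a minimum set totally dominating $S'$, independence of $S'$ and the disjointness of the neighbourhoods give $|T'|=|S'|$, and $X=S'\cup T'\cup S''$ is a $\gmtG$-set; the private-neighbour claim follows because, by (F), (D), (E) and (I), the partner $g^\sharp$ of $g\in S''$ has $g$ as its unique $X$-neighbour. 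Finally (K), (L) separate $G$ into an $\mathcal F_1$-piece and an $\mathcal F_2$-piece: $S'$ is a dominating set of $G[S'\cup P']$ and the packing (C) forces $\gamma(G[S'\cup P'])=|S'|$, while $S'\cup T'$ realises $\gmt(G[S'\cup P'])=2|S'|$; and $S''$ realises $\gmt(G[S''\cup P''])=|S''|$.

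For (M), suppose $H\neq K_2$ and $S'\neq\emptyset$, and pick $g\in S'$. By the above $D_g=\{a,b\}$ is a $2$-element TD-set of $H$ (so $a\sim b$ and $\{a,b\}$ dominates $H$), and since $N_G(g)\cap S=\emptyset$ the vertices $(g,a),(g,b)$ receive no help from neighbouring fibres. As $|V(H)|\ge 3$, choose $h^{*}\notin\{a,b\}$; the plan is to reroute the domination of the fibre ${}^{g}H$ and of the slices $(g',a),(g',b)$ at neighbours $g'$ of $g$ to produce a TD-set of $\GcH$ smaller than $|D|$, contradicting minimality and forcing $S'=\emptyset$. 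The main obstacle is exactly this family of local modification (exchange) arguments on a minimum TD-set: namely (H), (I) and the exclusion of extra edges inside $S''$, the $\mathcal F_1$-direction $\gmt(G[S'\cup P'])\ge 2|S'|$ in (K), and the rerouting in (M), each of which must show that a forbidden configuration lets one delete or re-route a vertex of $D$ (or of $X_0$) while preserving total domination, against minimality. By contrast (A), (B), (C), (D), (G), (J), the construction in (E'), and the domination bounds in (K) and (L) follow cleanly from $(\star)$ and the choice-function count above.
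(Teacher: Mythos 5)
Your fibre-slice setup and the choice-function argument are correct and handle (A)--(D) cleanly: the chain $2|S'|+|S''|\le |D|=\gamma_t(G)\le |X_0|\le |S|+|T|\le 2|S'|+|S''|$, exploited for \emph{every} choice of $t$, is a nice repackaging of the paper's counting argument, and the observation that $(\star)$ collapses to $N_H(D_g)=V(H)$ when $N_G(g)\cap S=\emptyset$ is exactly the right building block for (M). The trouble is that your proof stops where the real work of this proposition begins: you explicitly leave (H), (I), the exclusion of cross-height edges inside $S''$ (hence (E)), the lower bound $\gamma_t(G[S'\cup P'])\ge 2|S'|$ in (K), and all of (M) as ``the main obstacle,'' offering only the general shape of an exchange argument. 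These items are not routine residue; they are the technical core.

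Moreover, the dependency order you propose cannot be completed as written, because it is circular. You derive (F) ``granting (H),'' (G) from (I), and the private-neighbour claim in (E') from (E) and (I); but the exchange proofs of (H) and (I) (the only ones available, and the ones the paper uses) themselves require (E') --- one must know that the partner $a'$ of a vertex $a\in S''$ has ${\it pn}_G(a',X)=\{a\}$ to reach a contradiction after the swap --- and recertifying total domination after the swap in (H) requires (G). The idea that breaks this circle, and which is absent from your outline, is the product-structure argument for (E)/(E'): if $g\in S''_i$ had an $X$-private neighbour $g'\in P$, then for any $j\neq i$ the vertex $(g',h_j)$ must be dominated by some $(g'',h_j)\in D$ with $g''\in N_G(g')$ and $g''\neq g$, and $g''\in S\subseteq X$ contradicts privateness; this shows ${\it pn}_G(g,X)\subseteq S''$ \emph{before} any of (G), (H), (I) is known, and from it (E), (F), (G), (H), (I) follow in that order. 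Note also that even granting (I), your argument for (E') only shows $g^\sharp\in {\it pn}_G(g,X)$; it does not show ${\it pn}_G(g,X)$ lies in $S''$ and has size one, which is the part used downstream. Finally, for (M) your plan (reroute $D$ into a strictly smaller TD-set of $G\Box H$) is not what is needed and is harder than necessary: the paper instead uses $|D'|=2|S'|$ to find $\hat h$ with $(g,\hat h)\notin D$ and an $X$-private neighbour $g'\in P'$ of $g\in S'$, and then observes that $(g',\hat h)$ has no neighbour in $D$ at all, contradicting that $D$ is a TD-set --- a step your sketch would still have to supply.
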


\begin{proof} \barva{To prove statement~{\bf (\ref{thm:setsA})}, consider an arbitrary vertex $g$ of $G$.
It suffices to show that $g\in P$ whenever $g\not\in S$. So let $g\in V(G)\setminus S$ and fix an arbitrary vertex $h\in V(H)$. 
Since $S = p_G(D)$ and $g\not\in S$, we have $(g,h)\notin D$. Since $D$ is a TD-set of $G\Box H$, there exists a vertex $(g',h')\in D$ adjacent to $(g,h)$ in $G\Box H$. Since $g'\neq g$, we must have $h' = h$ and $gg'\in E(G)$. This implies that $p_G(g',h)\in S$, hence $g\in P$. Therefore~{\bf (\ref{thm:setsA})} holds.}
	
To prove statement~{\bf (\ref{thm:setsB})},	first note that $|D'|\geq 2|S'|$ and $|D''|=|S''|$. Thus, $\gmtG = \gmtGH = |D| = |D'| + |D''|\geq 2 |S'| + |S''|$. \barva{It is clear that $S$ totally dominates $P$. We claim that $S$ also totally dominates $S''$. Indeed, if $(g,h)\in D''$, then any $(g,h')\in V(G\Box H)$, where $h'\neq h$, does not belong to $D$. Since $D$ is a TD-set of $G\Box H$, we infer that there exists $(g',h)\in D$ such that $g'g\in E(G)$, and so $g'\in S$ totally dominates $g$. Altogether, the fact that $S$ totally dominates $S''$ and using {\bf (\ref{thm:setsB})}, we infer that $S$ totally dominates $V(G)\setminus S'$; and to totally dominate $S'$ we need to add at most $|S'|$ vertices to the vertices of $S$. Hence, $\gmt(G)\le |S|+|S'| = 2 |S'| + |S''| $, and combining this with the inequality at the beginning of this paragraph, we get statement~{\bf (\ref{thm:setsB})}.
		
As noted in the previous paragraph, the only vertices that may not be totally dominated in $G$ by $S$ are the vertices in $S'$. To satisfy statement~{\bf (\ref{thm:setsB})}, adding less than $|S'|$ vertices to $S' \cup S''$ does not yield a TD-set of $G$. Because one needs to totally dominate $|S'|$ vertices by using at least $|S'|$ vertices, each vertex that we add to $S$ in order to obtain a TD-set, has to totally dominate exactly one vertex from $S'$. This readily implies statement~{\bf (\ref{thm:setsC})}. Next, if a vertex in $S'$ has a neighbor in $S''$, then this vertex is already totally dominated by $S$, which again yields a TD-set of $G$ with less than $2 |S'| + |S''|$ vertices, a contradiction. Hence statements~{\bf (\ref{thm:setsD})} holds.
}
	
\barva{To show that statements~{\bf (\ref{thm:setsE})} and {\bf (\ref{thm:setsE'})}   hold, consider a minimum set of vertices $T'$ in $G$ that totally dominates $S'$. Statements~{\bf (\ref{thm:setsC})} and~{\bf (\ref{thm:setsD})} imply that $T' \subseteq P'$ and $|T'| = |S'|$, which in turn, using statement~{\bf (\ref{thm:setsB})}, implies that $X = S' \cup \barva{T'} \cup S''$ is a $\gmtG$-set (proving the first assertion of statement~{\bf (\ref{thm:setsE'})}). 
Therefore every vertex in $X$ has an $X$-private neighbor in $G$. We first show that vertices of $S''$ can have $X$-private neighbors only in $S''$. By statement~{\bf (\ref{thm:setsD})} they have no neighbors in $S'$. Suppose for a contradiction that there is a vertex $g\in S''_i$ (for some $\iOneN$) that has an $X$-private neighbor $g'$ in $G$ such that $g'\in P$. Let $j\in \{1,\ldots, n\}\setminus \{i\}$. Since $g'\in P$, no vertex from the $H$-fiber $^{g'}\!H$ is in $D$. Thus, since $D$ totally dominates $(g',h_j)$, there is some $g''\in V(G)$ such that $g'g''\in E(G)$ and $(g'',h_j)\in D$. Note that $g''\neq g$ since $i\neq j$ and $g\in S''_i$. Since $g''\in S$, we have $g''\in X$ and therefore $N(g')\cap X\neq \{g\}$. This shows that vertices of $S''$ can have $X$-private neighbors only in $S''$, as claimed. It follows that for every vertex $g\in S''$, the set ${\it pn}_G(g,X)$ is a non-empty subset of $S''$ and every vertex $g'\in {\it pn}_G(g,X)$ is of degree 1 in the subgraph of $G$ induced by $S''$. Applying the same argument with $g'$ in place of $g$ shows that $S''$ induces a $kK_2$ for some integer $k$, proving statement {\bf (\ref{thm:setsE})}. In particular, every vertex $g\in S''$ has a unique $X$-private neighbor, namely its unique neighbor in the graph $G[S'']$. This proves the second assertion of statement {\bf (\ref{thm:setsE'})}.

Since $D$ is a total dominating set in $G\Box H$, each vertex $(g,h_i)$ of $D''_i$ has a neighbor in $D$, which is not in the same $H$-fiber as $(g,h_i)$ by definition of $D''$. Hence, there is a vertex $g'\in V(G)$ such that $(g',h_i)\in D$ is a neighbor of $(g,h_i)$, and so, by statements~{\bf (\ref{thm:setsD})} and~{\bf (\ref{thm:setsE})}
we have $(g',h_i)\in D_i''$. By projecting to $G$, we get that for each vertex $g$ in $S_i''$, its unique neighbor $g'$ in $S''$ is in $S_i''$. We infer that each $S_i''$ induces a graph isomorphic to some $kK_2$, i.e., statement~{\bf (\ref{thm:setsF})} holds.

To prove statement~{\bf (\ref{thm:setsG})} first note that by statement~{\bf (\ref{thm:setsF})} the set $S''_i$ totally dominates $S''_i$ for every $\iOneN$. For the purpose of getting a contradiction, let us assume that there exists a vertex $g\in P''$ that is not totally dominated by $S_i''$ for some $\iOneN$. On the one hand, since $g\in P''$, there exists a vertex $a\in S_j''$ for some $j, j\neq i$, such that $ag\in E(G)$. On the other hand, vertex $(g,h_i)$ must be totally dominated by some vertex in $D$, which can only be a vertex from $G^{h_i}$. Let $(b,h_i)\in D$ be a vertex that totally dominates $(g,h_i)$. Clearly, by our assumption that $g$ is not totally dominated by $S_i''$, we have $b\in S'$. We may assume without loss of generality that a minimum set of vertices $T'$, used in the definition of $X$ in statement~{\bf (\ref{thm:setsE'})}, that totally dominates $S'$ contains vertex $g$ (which totally dominates vertex $b$ from $S'$). But then $a$ has two neighbors in $X$, namely $g$, and the unique neighbor $a'$ in $S_j''$. Hence, as the only neighbor of $a'$ in $S$ is vertex $a$ (by statement~{\bf (\ref{thm:setsE})}), we infer that $a'$ has no $X$-private neighbors, a contradiction with statement~{\bf (\ref{thm:setsE'})}. This implies that $S''_i$ totally dominates $P''$, and so {\bf (\ref{thm:setsG})} holds.
}
	
Let $X = S' \cup T' \cup S''$, where $T'$ is a minimum set of vertices that totally dominates $S'$, be defined as in statement~{\bf (\ref{thm:setsE'})}, and recall that $X$ is a $\gmtG$-set. Suppose now that two distinct vertices $u,v \in S''_i$, where $\iOneN$, have a common neighbor $w$ in $G$. Statements~{\bf (\ref{thm:setsD})}, {\bf (\ref{thm:setsE})}, and~{\bf (\ref{thm:setsF})} imply that $w\in P''$. If $u$ and $v$ are adjacent, then by statements~{\bf (\ref{thm:setsE'})} and~{\bf (\ref{thm:setsF})} vertex $u$ is the only $X$-private neighbor of $v$ in $G$ and vice versa.
\barva{We will show that in this case $X'=(X - \{u,v\}) \cup \{w\}$ is also a TD-set of $G$. Indeed, by statement~{\bf (\ref{thm:setsG})}, $P''\cup S_j''$ is totally dominated by $S_j''$ for each $j\in\{1,\ldots,n\}\setminus\{i\}$; all vertices $x\in S''\setminus\{u,v\}$ are totally dominated by their unique neighbor in $S''$, while $u$ and $v$ are totally dominated by $w$. Every vertex in $G$ not in $S''\cup P''$ is totally dominated by a vertex in $X$, and every such vertex is also a vertex of $X'$. Hence $X'$ is a TD-set of $G$, which contradicts the minimality of $X$.}	
If $u$ and $v$ are non-adjacent, then $u$ has a neighbor $u' \in S''_i$ with ${\it pn}_G(u',X) = \{u\}$, and $v$ has a neighbor $v' \in S''_i$ with ${\it pn}_G(v',X) = \{v\}$. By a similar reasoning as in the previous case, we infer that $(X - \{u',v'\}) \cup \{w\}$ is also a TD-set of $G$, contradicting the minimality of $X$. Statement~{\bf (\ref{thm:setsH})} follows.
	
Suppose that $w$ is a common neighbor of $u \in S'$ and $v \in S''$. \barva{We may assume without loss of generality that $w$ is in $\barva{T'}$ (because vertices from $\barva{T'}$ were chosen  arbitrarily as neighbors of vertices from $S'$), and so $w\in X$.
Recall that $v$ has a neighbor $v' \in S''$ such that ${\it pn}_G(v',X) = \{v\}$, which is a contradiction with $w\in X$ being adjacent to $v$. Statement~{\bf (\ref{thm:setsI})} follows.}
	
Next we show that $S', S'', P'$ and $P''$ are pairwise disjoint sets. It is clear from definitions that 
{$S'\cap S'' = S'\cap P' = S'' \cap P'' = \emptyset$.}
As a consequence of statement~{\bf (\ref{thm:setsD})},  $S' \cap P'' = \emptyset$ and $S'' \cap P' = \emptyset$. Now, $P' \cap P'' = \emptyset$ follows from statement~{\bf (\ref{thm:setsI})}. Hence statement~{\bf (\ref{thm:setsJ})} is true.

Note that $S'\cup T'$ totally dominates $S' \cup P'$, and $S''$ totally dominates
$S'' \cup P''$. Since $\gmtG = 2 |S'| + |S''|$, we have $\gmt(G[S' \cup P']) \ge 2 |S'|$,
and $\gmt(G[S'' \cup P'']) \ge |S''|$. On the other hand, $S'$ is a dominating set of $G[S' \cup P' ]$, so 
$\gmt(G[S' \cup P' ])\le 2 \gamma(G[S' \cup P' ])\le 2 |S'|$, which implies that
$\gmt(G[S' \cup P' ])= 2 \gamma(G[S' \cup P' ])= 2 |S'|$ and proves statement~{\bf(\ref{thm:setsK})}. Since $S''$ is a total dominating set of $G[S'' \cup P'']$, we get $\gmt(G[S'' \cup P'']) \le |S''|$, hence $\gmt(G[S'' \cup P'']) = |S''|$, which proves statement~{\bf(\ref{thm:setsL})}.
	
From $|S''| = |D''|$ and $2 |S'| + |S''| = \gmtG = \gmtGH = |D| = |D'| + |D''|$ it follows that $|D'|=2|S'|$. Let $(g,h) \in D'$.
By the definition of $D'$, there is exactly one $h' \in H, h \neq h'$, such that $(g,h') \in D$. So if $H$ is a graph on $3$ or more vertices there exists some $\hat{h} \in H$ such that $(g,\hat{h}) \notin D$. \barva{Because of statements~{\bf (\ref{thm:setsC})} and {\bf (\ref{thm:setsD})} and the fact that $X$ is a $\gamma_t(G)$-set}, vertex $g$ has an $X$-private neighbor $g' \in P'$. But then $(g',\hat{h})$ is not totally dominated by any vertex of $D$.
 Hence, if $S' \neq \emptyset$, then $D' \neq \emptyset$, in which case $H = K_2$. This implies statement~{\bf (\ref{thm:setsM})}.
\end{proof}


From Proposition~\ref{thm:sets} one easily deduces Theorem~\ref{thm:lu_hou_k2}.
If $S''=\emptyset$, then $G$ is isomorphic to $G[S' \cup P']$,
and by statement~{\bf (\ref{thm:setsK})} we have $\gamma_t(G)=2\gamma(G)$, that is, $G\in {\cal F}_1$. If, on the other hand,  $S'=\emptyset$, then $G$ is isomorphic to $G[S'' \cup P'']$, which, by {statements~{\bf (\ref{thm:setsG})} and~{\bf (\ref{thm:setsL})}, belongs to
$\mathcal{F}_2$. In the remaining case, we infer, using statements~{\bf (\ref{thm:setsB})}, {\bf (\ref{thm:setsK})}, and {\bf (\ref{thm:setsL})}, that  $G \in \mathcal{F}_3$.

Now we have everything ready to derive the desired characterization.

\begin{theorem}\label{thm:GH2}
Let $G$ and $H$ be nontrivial connected graphs with $\gmt(H)=2$. Then $\gmtG = \gmtGH$ if and only if
\barva{one of the following conditions holds:}
\begin{enumerate}[(i)]
  \item \barva{$G = K_2$ and $\gamma(H) = 1$};
  \item $H=K_2$ and $G \in \mathcal{F}_1 \cup \mathcal{F}_2\cup \mathcal{F}_3$.
\end{enumerate}
\end{theorem}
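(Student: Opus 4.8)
The plan is to prove both directions of the equivalence, leaning heavily on Proposition~\ref{thm:sets} for the forward direction and on Theorem~\ref{thm:lu_hou_k2} together with an explicit construction for the backward direction.

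For the forward direction, assume $\gmtG=\gmtGH$ with $H$ connected and $\gmt(H)=2$. The key dichotomy is whether $H=K_2$ or not, and Proposition~\ref{thm:sets}{\bf(\ref{thm:setsM})} provides exactly this split. If $H=K_2$, then by Theorem~\ref{thm:lu_hou_k2} (noting $\gmtGH=\gmtGK$) we immediately get $G\in\mathcal F_1\cup\mathcal F_2\cup\mathcal F_3$, which is case~(ii). If $H\neq K_2$, then statement~{\bf(\ref{thm:setsM})} forces $S'=\emptyset$, so $D=D''$ and $S=S''$. The plan here is to show $G\cong K_2$ and $\gamma(H)=1$. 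First, since $S'=\emptyset$, statement~{\bf(\ref{thm:setsE})} gives that $S''=S=V(G)\setminus P$ induces some $kK_2$, and by {\bf(\ref{thm:setsA})} we have $S\cup P=V(G)$; I would argue that in fact $k=1$ and $P=\emptyset$, so that $G=K_2$. To see $\gamma(H)=1$: if $\gamma(H)\ge 2$ while $\gmt(H)=2$, I would exhibit a TD-set of $G\Box H$ strictly smaller than $\gmtG$, contradicting the hypothesis $\gmtG=\gmtGH$; alternatively, one uses that for $H$ with $\gamma(H)=1$ and $G=K_2$ a direct count gives $\gmtGH=2=\gmtG$. The cleanest route is to observe that when $G=K_2$ we have $\gmtG=2$, and $\gmt(K_2\Box H)=2$ holds precisely when $H$ has a dominating vertex, i.e.\ $\gamma(H)=1$, since two vertices in $K_2\Box H$ totally dominate everything only if they form a fiber over a universal vertex of $H$.

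For the backward direction, there are two cases matching (i) and (ii). Case~(ii) is immediate: if $H=K_2$ and $G\in\mathcal F_1\cup\mathcal F_2\cup\mathcal F_3$, then Theorem~\ref{thm:lu_hou_k2} gives $\gmtG=\gmtGK=\gmtGH$. For case~(i), suppose $G=K_2$ and $\gamma(H)=1$. I would verify directly that $\gmt(K_2\Box H)=2=\gmt(K_2)=\gmtG$. Let $v$ be a universal vertex of $H$, and let $a,b$ be the two vertices of $K_2$. Then the set $\{(a,v),(b,v)\}$ is a TD-set of $K_2\Box H$: every vertex $(a,w)$ is dominated by $(b,w)$'s fiber-neighbor $(a,v)$ through the edge $vw$ in $H$ (using that $v$ is universal), and $(a,v),(b,v)$ dominate each other across the $K_2$-edge. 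Since $\gmt(K_2\Box H)\ge\gmt(K_2)=2$ by inequality~\eqref{e1}, equality holds.

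The main obstacle I anticipate is the $H\neq K_2$ subcase of the forward direction, specifically pinning down that $G$ must be exactly $K_2$ rather than merely a disjoint-$K_2$-type structure. The sets $S''$ and $P$ could a priori be large, so I expect to need a careful argument: using that $H$ has a third vertex $\hat h$ (as in the proof of {\bf(\ref{thm:setsM})}), one shows any $P$-vertex or any second $K_2$-component would leave some $(g',\hat h)$ undominated or would let one build a smaller TD-set of $G\Box H$, contradicting $\gmtG=\gmtGH$. Reconciling the two characterizations of ``$\gmt(K_2\Box H)=2$'' (via universal vertex versus via $\gamma(H)=1$) is routine but must be stated precisely. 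The remaining steps are direct applications of the proposition and of Theorem~\ref{thm:lu_hou_k2}.
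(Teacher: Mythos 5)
Your overall architecture is sound, and most of it is correct: the backward direction, the $H=K_2$ branch of the forward direction, and the reduction of ``$\gamma_t(K_2\Box H)=2$'' to ``$\gamma(H)=1$'' all work (the last of these is in fact slightly more direct than the paper, which treats condition~(i) as condition~(ii) with the roles of $G$ and $H$ swapped, via $H\in\mathcal{F}_1$). However, there is a genuine gap exactly where you flag ``the main obstacle'': in the branch $H\neq K_2$ you assert, but do not prove, that $k=1$ and $P=\emptyset$, i.e.\ that $G=K_2$. This is the heart of the theorem, and neither of your two sketched strategies is developed into an argument. The first (``would leave some $(g',\hat h)$ undominated'') cannot work as stated, because $D$ is by hypothesis a TD-set of $\GcH$, so nothing is left undominated; the only viable route is to contradict minimality by constructing a strictly smaller TD-set, and that construction is the nontrivial content you have omitted.

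The paper's argument, which you would need to reconstruct, runs as follows. Assume $G\neq K_2$ and $n=|V(H)|\ge 3$. By {\bf (\ref{thm:setsM})}, $S'=\emptyset$, so $V(G)=S''\cup P''$ and $\gamma_t(G)=|S''|$ by {\bf (\ref{thm:setsL})}; each $S''_i$ is a nonempty induced $kK_2$ with no edges to the other $S''_j$, and by {\bf (\ref{thm:setsG})} each $S''_i$ alone totally dominates $P''\cup S''_i$. If $P''=\emptyset$, then $G$ is connected and induces a disjoint union of edges, hence $G=K_2$, contrary to assumption; so pick $x\in P''$. By {\bf (\ref{thm:setsG})} and {\bf (\ref{thm:setsH})}, $x$ has exactly one neighbor $x_i$ in each $S''_i$; let $y_i$ be the partner of $x_i$ inside $S''_i$. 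Then $T=(S''\setminus\{y_1,y_2\})\cup\{x\}$ is a TD-set of $G$ of size $|S''|-1$: the part $S''_3$ (which exists precisely because $n\ge 3$) still totally dominates $P''$, the vertex $x$ dominates $x_1$ and $x_2$, and $x_1,x_2$ dominate $y_1,y_2$. This contradicts $\gamma_t(G)=|S''|$. Essentially every clause of Proposition~\ref{thm:sets} is consumed here, and the hypothesis $n\ge 3$ enters through the existence of the spare part $S''_3$; without supplying this (or an equivalent) argument, your proof of the forward direction is incomplete.
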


\begin{proof}
Let $G$ and $H$ be connected graphs with $\gmtG = \gmtGH$, let $V(H)=\{h_1,\ldots,h_n\}$ and let
$D, D', D'',$ $D''_i, S', S'', P, P', P''$ and $S''_i$, for $\iOneN$,
be the sets as defined in the beginning of this section.
\barva{Suppose first that $G = K_2$. Then Theorem~\ref{thm:lu_hou_k2} (alternatively, Proposition~\ref{thm:sets}) implies that 
$H \in \mathcal{F}_1 \cup \mathcal{F}_2\cup \mathcal{F}_3$. If $H\in \mathcal{F}_2\cup \mathcal{F}_3$, then 
$\gamma_t(H) \ge 4$, which contradicts $\gmt(H)=2$. Therefore, $H\in \mathcal{F}_1$, that is, 
$\gmt(H)=2\gamma(H)$. Since $\gmt(H)=2$, we infer that $\gamma(H) = 1$ and condition $(i)$ holds.}

\barva{Suppose now that $G\neq K_2$.} If $H=K_2$, then the result follows from Theorem~\ref{thm:lu_hou_k2} (alternatively, from Proposition~\ref{thm:sets}). Otherwise, $H$ is a graph on $n$ vertices for some $n\ge 3$, and we will show that this will lead to a contradiction.
By statement~{\bf (\ref{thm:setsM})} in Proposition~\ref{thm:sets}, $S' = \emptyset$. Hence, $P' = \emptyset$ and $V(G) = S'' \cup P''$. By statement~{\bf (\ref{thm:setsL})} we get $\gmtG=|S''|$, and Proposition~\ref{thm:sets} also shows that $S''$ partitions into the sets $S''_i=p_G(D''_i)$, which are all non-empty. In addition, each $S''_i$ induces $kK_2$ for some $k\ge 1$, and totally dominates \barva{
$S''_i \cup P'' $}. Now, by \barva{statements~{\bf (\ref{thm:setsE})} and~{\bf(\ref{thm:setsF})}}, there are no edges between vertices of $S''_i$ and $S''_j$, for $i\neq j$. Since $G$ is connected and $n>1$, set $P''$ is nonempty, so let $x\in P''$.
By statements~{\bf (\ref{thm:setsG})} and~{\bf (\ref{thm:setsH})}, for every $\iOneN$, vertex $x$ has a unique neighbor in $S''_i$.
For $i\in \{1,2\}$, let $x_i$ be the unique neighbor of $x$ in $S''_i$ and let $y_i$ be the unique neighbor of $x_i$ in $S''_i$.
Let $T = (S''\setminus\{y_1,y_2\})\cup\{x\}$.
We claim that $T$ is a TD-set of $G$, which will imply that $\gmtG \le |T|<|S''|$, contrary to the optimality of $S''$.
First of all, set $P''$ is totally dominated by $T$, since $S''_3\subseteq T$.
For every $i\ge 3$, set $S''_i$ is totally dominated by itself
(and therefore by $T$, since $S''_i\subseteq T$).
Vertices $x_1$ and $x_2$ are totally dominated by $x$ (and therefore by $T$, since $x\in T$).
Vertices $y_1$ and $y_2$ are totally dominated by $x_1$ and $x_2$, respectively (and therefore by $T$, since $\{x_1,x_2\}\subseteq T$).
Any other vertex in $S''_i$, where $i\in \{1,2\}$, has a unique neighbor in $S''_i$, which belongs to $T$, and is therefore totally dominated by $T$.
This completes the proof.

The converse follows from Theorems~\ref{thm:ho_ineq} and~\ref{thm:lu_hou_k2}.
For the sake of completeness, we briefly describe the construction. 
\barva{First note that condition~{\it (i)} is a special case of condition~{\it (ii)} with roles of $G$ and $H$ interchanged (indeed, $\gamma(H) = 1$ and $\gamma_t(H) = 2$ imply $H\in {\cal F}_1$). Hence, let us assume that condition {\it (ii)} holds, that is,} let $G$ and $H$ be graphs such that $H=K_2$ and $G \in \mathcal{F}_1 \cup \mathcal{F}_2\cup \mathcal{F}_3$.
By Theorem~\ref{thm:ho_ineq}, we have $\gmtGH\ge \gmtG$.
If $G \in \mathcal{F}_1$, then $\gmtG = 2 \gamma(G)$ and a TD-set of $\GcH$ of size $\gmtG$ can be obtained by taking a copy of a fixed minimum dominating set of $G$ in each of the two $G$-fibers in $\GcH$.
Suppose now that $G\in \mathcal{F}_2$. Then, $G$ has a $\gmtG$-set $D$ that can be partitioned into two nonempty subsets
$D_1$ and $D_2$ such that $D_1 = V(G) \setminus N_G(D_2)$ and $D_2 = V(G) \setminus N_G(D_1)$.
A TD-set of $\GcH$ of size $\gmtG$ can be obtained by taking a copy of
$D_1$ in one of the two $G$-fibers in $\GcH$ and
$D_2$ in the other $G$-fiber.
Finally, suppose that $G\in \mathcal{F}_3$. Then, the vertex set of $G$ can be partitioned
into two nonempty subsets $V_1$ and $V_2$ such that
$G_1 = G[V_1] \in \mathcal{F}_1$,
$G_2 = G[V_2] \in \mathcal{F}_2$, and $\gmtG = \gmt(G_1) + \gmt(G_2)$.
Since $G_2\in \mathcal{F}_2$, graph $G_2$ has a $\gamma_t(G_2)$-set
$D''$ that can be partitioned into two nonempty subsets
$D_1''$ and $D_2''$ such that $D_1'' = V(G_2) \setminus N_{G_2}(D_2'')$ and $D_2'' = V(G_2) \setminus N_{G_2}(D_1'')$.
Moreover, let $D'$ be a minimum dominating set of $G_1$.
Then, a TD-set of $\GcH$ of size $\gmtG$ can be obtained by
taking a copy of $D'\cup D_1''$ in one of the two $G$-fibers in $\GcH$ and
$D'\cup D_2''$ in the other $G$-fiber.
\end{proof}

Note that since the class of graphs $\mathcal{F}_1 \cup \mathcal{F}_2 \cup \mathcal{F}_3$ is closed under disjoint union and under taking components,
the connectedness assumption on $G$ in Theorem~\ref{thm:GH2} could be replaced with the more general condition asserting that $G$ has no isolated vertices.

\section{Approximating the equality in $\gmtG \gmt (H) \le 2 \gmtGH$}
\label{sec:asymptotic}

For two connected graphs $G$ and $H$ without isolated vertices, let us consider the quotient of the
total domination number of their Cartesian product and the product of their total domination numbers,
$$q_t(G,H) = \frac{\gmtGH}{\gmtG \gmt(H)}\,.$$
We call it the {\em total domination quotient} of graphs $G$ and $H$.
By Theorem~\ref{thm:ho_ineq}, we infer that
\begin{equation}
\label{e2}
q_t(G,H)\ge 1/2
\end{equation}
for every two connected graphs $G$ and $H$ without isolated vertices.
For all known pairs of graphs with $q_t(G,H)= 1/2$, one of $G$ and $H$ is isomorphic to $K_2$
(see Theorem~\ref{thm:GH2}). We in fact suspect that there are no other such pairs.
On a related note, one may wonder whether there exists some $\epsilon>0$ such that for all connected graphs $G$ and $H$ having sufficiently large total domination numbers, we have $q_t(G,H)\ge 1/2+\epsilon\,.$
As we show next, this is not the case: we exhibit an infinite family of graphs $\{G_n\}_{n\ge 2}$ such that $\gamma_t(G_n) = 2n$ and
$\lim_{n\to\infty}q_t(G_n,G_n) = 1/2$. However, for each $n\ge 1$ we have $q_t(G_n,G_n) > 1/2$.

For $n\ge 1$, let $G_n$ denote the graph obtained from $K_n$ by attaching an end-vertex of a $P_3$ to each vertex of the $n$-clique.
Formally, this is a graph with vertex set $\{a_1,\ldots, a_n\}\cup \{b_1,\ldots, b_n\}\cup \{c_1,\ldots, c_n\}$,
where the set $\{a_1,\ldots, a_n\}$ forms a clique, every $b_i$ is adjacent to every $a_i$ and every $c_i$, and there are no other edges.
Note that $G_2\cong P_6$; see Fig.~\ref{G3G4} for the next two examples.

\begin{figure}[h!]
\begin{center}
\includegraphics[scale=0.6]{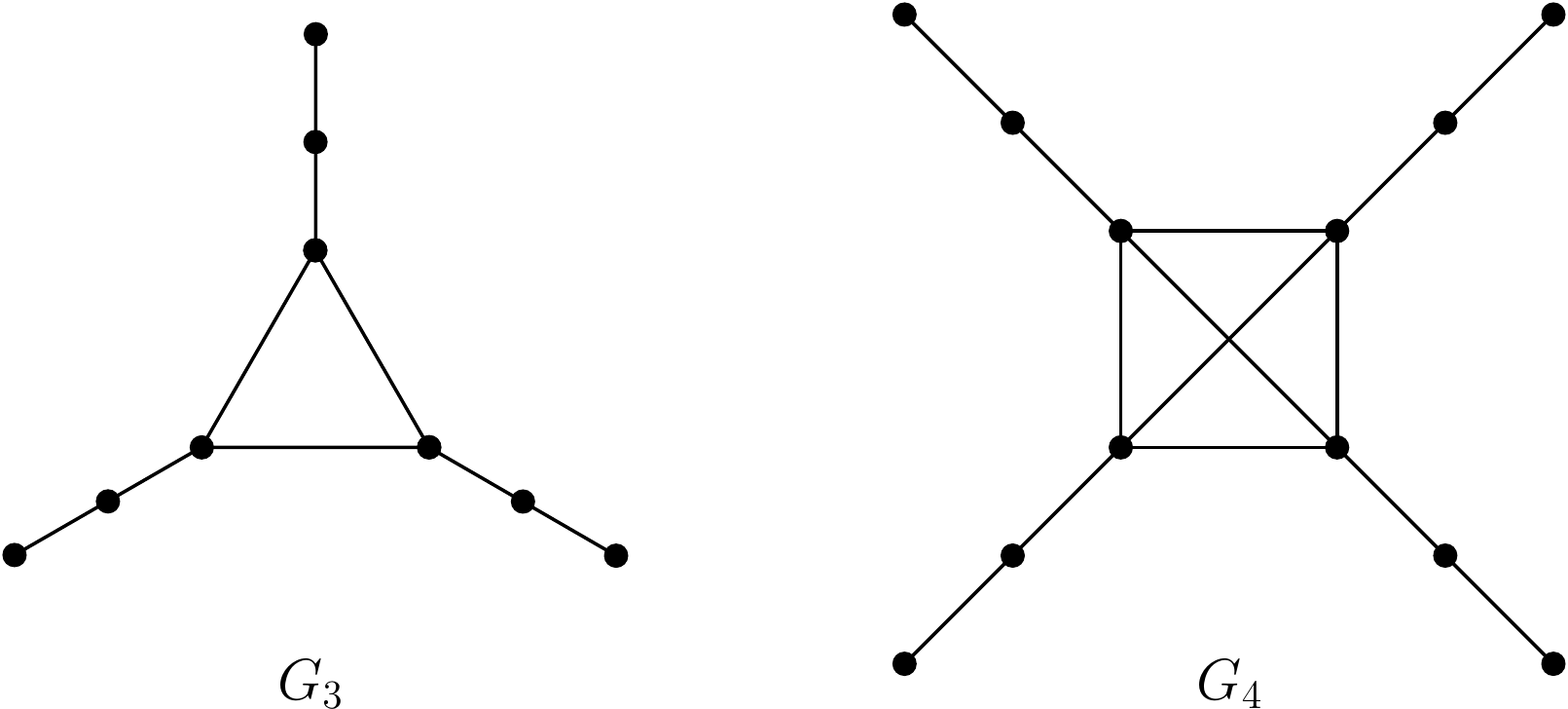}
\caption{The graphs $G_3$ and $G_4$.}
\label{G3G4}
\end{center}
\end{figure}


\begin{proposition}
For all integers $2\le k\le n$ we have
$$2kn+k\le
\gamma_t(G_k\Box G_n)\le 2kn+2k\,.$$
\end{proposition}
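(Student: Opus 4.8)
The plan is to work with the $G_n$-fibers of the product lying over the vertices of $G_k$. Write the vertices of $G_k$ as $a_i,b_i,c_i$ ($i\in\{1,\dots,k\}$) and those of $G_n$ as $u_j,v_j,w_j$ ($j\in\{1,\dots,n\}$), where $\{u_1,\dots,u_n\}$ is the clique, $v_j$ is the middle vertex of the attached path and $w_j$ its leaf. For fixed $i$ I call the three fibers ${}^{a_i}G_n,{}^{b_i}G_n,{}^{c_i}G_n$ \emph{layer $i$}, and I record the three numbers $\alpha_i,\beta_i,\gamma_i$ counting the vertices of a fixed minimum TD-set $S$ in these fibers. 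The two structural facts I will exploit are that ${}^{c_i}G_n$ is attached to the rest of the product only by the matching to ${}^{b_i}G_n$ (as $c_i$'s unique $G_k$-neighbour is $b_i$), while the fibers ${}^{a_i}G_n$ are mutually joined at equal coordinates by the clique on $a_1,\dots,a_k$.

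For the lower bound I would first prove the per-layer estimate $\beta_i+\gamma_i\ge 2n$. Indeed, ${}^{c_i}G_n\cong G_n$ can only be totally dominated from inside itself or from ${}^{b_i}G_n$, so the relevant vertices are exactly those counted by $\beta_i$ and $\gamma_i$. Fixing a coordinate $j$, the leaf $(c_i,w_j)$ is dominated only by $(c_i,v_j)$ or $(b_i,w_j)$, while its support $(c_i,v_j)$ is dominated only by $(c_i,u_j),(c_i,w_j)$ or $(b_i,v_j)$; a short check shows no single vertex handles both, so $S$ contains at least two vertices among the coordinates $\{u_j,v_j,w_j\}$ of the fibers $b_i,c_i$. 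Summing over $j$ gives $\beta_i+\gamma_i\ge 2n$, hence $\sum_i(\beta_i+\gamma_i)\ge 2kn$. To upgrade this to $2kn+k$ I would analyse the union of the $a$-fibers, which induces $K_k\Box G_n$, where a cell $(a_i,y)$ is dominated only from inside this subgraph or by its matched partner $(b_i,y)$. Arguing by contradiction from $|S|\le 2kn+k-1$: since the $b,c$-fibers already consume $\ge 2kn$, some layer $i$ has $\alpha_i=0$ and tight count $\beta_i+\gamma_i=2n$; tracing how the cells $(a_i,u_j)$ and $(a_i,w_j)$ acquire a neighbour in $S$ — necessarily via $a$-fibers of \emph{other} layers or via extra vertices $(b_i,u_j),(b_i,w_j)$ that would violate tightness — should force a global surplus of at least $k$.

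For the upper bound I would exhibit the explicit TD-set
$$S=\bigcup_{i=1}^{k}\Big(\{(b_i,v_j),(b_i,w_j):2\le j\le n\}\cup\{(c_i,u_1),(c_i,v_1)\}\cup\{(a_i,u_1),(a_i,w_1)\}\Big),$$
of size $k\bigl(2(n-1)+2+2\bigr)=2kn+2k$, and verify that it totally dominates $G_k\Box G_n$. The verification is routine but hinges on the clique structure of both factors: $(a_i,u_1)$ and $(a_i,w_1)$ are self-dominated across layers by the $K_k$ on $a_1,\dots,a_k$, the single vertex $(a_i,u_1)$ dominates the whole rook-row $(a_i,u_j)$ via the clique $u_1,\dots,u_n$, and likewise $(c_i,u_1)$ dominates all of $(c_i,u_j)$; the pair $(c_i,u_1),(c_i,v_1)$ dominates each other and the leaf $(c_i,w_1)$, the vertices $(b_i,v_j),(b_i,w_j)$ ($j\ge2$) mutually dominate and cover the $a$- and $c$-fibers in those coordinates, and the otherwise troublesome cell $(b_i,w_1)$ is rescued precisely by its partner $(a_i,w_1)$.

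I expect the main obstacle to be the extra additive $k$ in the lower bound. The clean part, $\sum_i(\beta_i+\gamma_i)\ge 2kn$, is genuinely per-layer, but domination inside the $a$-block $K_k\Box G_n$ is inherently \emph{cross-layer}, so there is no per-layer bound $\alpha_i\ge1$ to sum; the gain of $k$ must come from a careful counting that balances the possibility of dominating the $a$-fibers either internally through the clique or from below through the $b$-fibers (at the cost of breaking the tight $2n$ estimate). Making this trade-off rigorous, rather than the construction or the $2kn$ estimate, is where the work lies — and it also explains why the result is stated as the interval $[2kn+k,\,2kn+2k]$ rather than an exact value.
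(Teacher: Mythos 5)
Your upper bound is correct and is in fact \emph{identical} to the paper's: your set $\bigcup_i\bigl(\{(b_i,v_j),(b_i,w_j):j\ge 2\}\cup\{(c_i,u_1),(c_i,v_1)\}\cup\{(a_i,u_1),(a_i,w_1)\}\bigr)$ is exactly the paper's $(A\times \{x_1,z_1\})\cup \bigl(B\times ((Y\setminus \{y_1\})\cup (Z \setminus \{z_1\}))\bigr)\cup (C\times \{x_1,y_1\})$ after renaming, and your case check covers everything (including the delicate cells $(b_i,w_1)$ and $(b_i,u_j)$). The first half of your lower bound is also sound: for each fixed $(i,j)$ the dominator set of $(c_i,w_j)$, namely $\{(c_i,v_j),(b_i,w_j)\}$, and the dominator set of $(c_i,v_j)$, namely $\{(c_i,u_j),(c_i,w_j),(b_i,v_j)\}$, are disjoint, and these pairs of pools are pairwise disjoint over all $(i,j)$, so $\sum_i(\beta_i+\gamma_i)\ge 2kn$. (The paper gets the same $2kn$ by counting dominators of $C\times Z$ and then of $B\times Z$; the bookkeeping differs but the content is the same.)

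The genuine gap is exactly where you place it: the additive $+k$. Your sketch --- locate a layer with $\alpha_i=0$ and tight $\beta_i+\gamma_i=2n$ and ``trace'' a surplus --- cannot work layer by layer, because a single deficient layer can be repaired by one extra vertex in a neighbouring $a$-fiber (the clique on $a_1,\dots,a_k$ lets layers help each other), so you only extract a surplus of $O(1)$, not $k$, and iterating over layers requires controlling precisely the cross-layer interaction you identify as the obstacle. The paper closes this with a third disjoint-pool count rather than a contradiction: every neighbour of a vertex of $A\times X$ (clique times clique, i.e.\ the cells $(a_i,u_j)$) lies in $(A\times X)\cup(A\times Y)\cup(B\times X)$, and this pool is disjoint from both pools already consumed by the $2kn$ count (yours only involve row $c_i$ entirely and row $b_i$ at coordinates $v_j,w_j$, never $(b_i,u_j)$ or anything in the $a$-fibers). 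It then remains to show that totally dominating $A\times X$ from this pool costs at least $k$ vertices: each vertex of $(A\times Y)\cup(B\times X)$ dominates exactly one cell of the rook's graph $A\times X$, while $t<k\le n$ vertices chosen inside $A\times X$ leave at least $(k-t)(n-t)\ge k-t$ cells in unoccupied rows and columns, each needing its own single-cell dominator; hence at least $k$ vertices in total, all new, giving $|S|\ge 2kn+k$. Replacing your contradiction sketch by this direct count is the missing step.
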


\begin{sloppypar}
\begin{proof}
Let us denote the vertices of the first factor, isomorphic to $G_k$, as
$a_1,\ldots, a_k, b_1,\ldots, b_k, c_1,\ldots, c_k$,
where $A = \{a_1,\ldots, a_k\}$ is the $k$-clique,
$B = \{b_1,\ldots, b_k\}$, $C = \{c_1,\ldots, c_k\}$ is the set of vertices
of degree $1$, and $b_i$ is the unique neighbor of $c_i$, for each $i$.
For the other factor, isomorphic to $G_n$, we will denote its vertices with
$x_1,\ldots, x_n, y_1,\ldots, y_n, z_1,\ldots, z_n$,
where $X = \{x_1,\ldots, x_n\}$ is the $n$-clique,
$Y = \{y_1,\ldots, y_n\}$, $Z = \{z_1,\ldots, z_n\}$ is the set of vertices
of degree $1$, and $y_i$ is the unique neighbor of $z_i$, for each $i$.

To show that $\gamma_t(G_k\Box G_n)\le 2kn+2k$, we will show that
$G_k\Box G_n$ has a total dominating set $D$ with $|D| = 2kn+2k$.
Set $$D = (A\times \{x_1,z_1\})\cup \Big(B\times \big((Y\setminus \{y_1\})\cup (Z \setminus \{z_1\})\big)\Big)\cup (C\times \{x_1,y_1\})\,,$$
see Fig.~\ref{TD-set}.

\begin{figure}[h!]
\begin{center}
\includegraphics[scale=1]{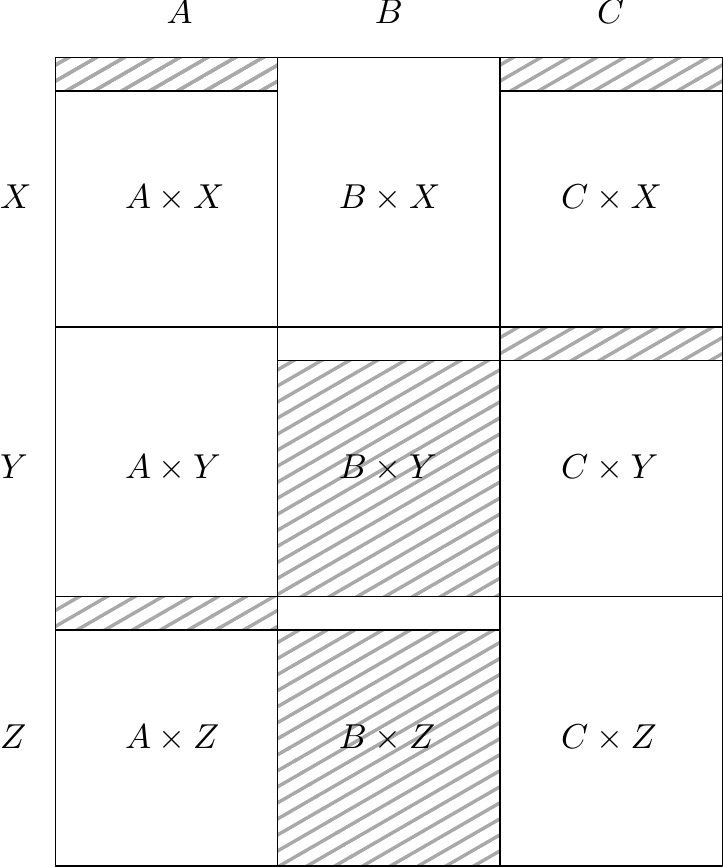}
\caption{The shaded area depicts the total dominating set $D$ of $G_k\Box G_n$.}
\label{TD-set}
\end{center}
\end{figure}
Clearly, $|D| = 2kn+2k$. To see that $D$ is a total dominating set of $G_k\Box G_n$, note that:
\begin{itemize}
  \item $A\times X$ is totally dominated by $A\times \{x_1\}$,
  \item $B\times \{x_1\}$ is totally dominated by $A\times \{x_1\}$ and
$B\times (X\setminus \{x_1\})$ is totally dominated by $B\times (Y\setminus \{y_1\})$,
  \item $C\times \{x_1\}$ is totally dominated by \barva{$C\times \{y_1\}$} and
$C\times (X\setminus \{x_1\})$ is totally dominated by $C\times \{x_1\}$,
  \item \barva{$A\times \{y_1\}$ is totally dominated by $A\times \{x_1\}$ and
$A\times (Y\setminus \{y_1\})$ is totally dominated by $B\times (Y\setminus \{y_1\})$,}
  \item $B\times \{y_1\}$ is totally dominated by $C\times \{y_1\}$ and
$B\times (Y\setminus \{y_1\})$ is totally dominated by $B\times (Z\setminus \{z_1\})$,
  \item $C\times \{y_1\}$ is totally dominated by $C\times \{x_1\}$ and
$C\times (Y\setminus \{y_1\})$ is totally dominated by $B\times (Y\setminus \{y_1\})$,
  \item \barva{$A\times \{z_1\}$ is totally dominated by $A\times \{z_1\}$ and
  $A\times (Z\setminus \{z_1\})$ is totally dominated by $B\times (Z\setminus \{z_1\})$,}
  \item $B\times \{z_1\}$ is totally dominated by $A\times \{z_1\}$ and
$B\times (Z\setminus \{z_1\})$ is totally dominated by $B\times (Y\setminus \{y_1\})$,
  \item $C\times \{z_1\}$ is totally dominated by $C\times \{y_1\}$ and
$C\times (Z\setminus \{z_1\})$ is totally dominated by $B\times (Z\setminus \{z_1\})$.
\end{itemize}

It remains to show that $\gamma_t(G_k\Box G_n)\ge 2kn+k$. Let $D$ be a minimum total dominating set of
$G_k\Box G_n$. Note that vertices of $C\times Z$ are totally dominated only by vertices in $(B\times Z)\cup (C\times Y)$
and no two vertices of $C\times Z$ have a common neighbor. Hence, at least $|C|\cdot |Z| = kn$ vertices from $D$
are needed to totally dominate $C\times Z$. Similarly, vertices in $B\times Z$ are totally dominated only by vertices in
$(A\times Z)\cup (B\times Y)\cup (C\times Z)$ and no two vertices of $B\times Z$ have a common neighbor.
Consequently, additional $|B|\cdot |Z| = kn$ vertices from $D$ are needed to totally dominate $B\times Z$.
Finally, vertices in $A\times X$ are totally dominated only by vertices in
$(A\times X)\cup (A\times Y)\cup (B\times X)$, and thus one can easily see that at least $k$ additional
vertices from $D$ are needed to totally dominate $A\times X$.
Altogether, the above arguments imply the claimed inequality, as $\gamma_t(G_k\Box G_n) = |D|\ge 2kn+k$.
\end{proof}
\end{sloppypar}

\begin{corollary}
For all integers $2\le k\le n$, we have
$$\frac{1}{2}+\frac{1}{4n}\le q_t(G_k, G_n)\le \frac{1}{2}+\frac{1}{2n}\,.$$
In particular, for every $k\ge 2$ we have
$$\lim_{n\to \infty} q_t(G_k, G_n) = \frac{1}{2}.$$
\end{corollary}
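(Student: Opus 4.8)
The plan is to observe that the Corollary is a direct arithmetic consequence of the preceding Proposition together with the known value of the total domination number of $G_m$. First I would recall that $\gamma_t(G_m) = 2m$ for every $m\ge 2$, as noted in the introduction; this is the only external ingredient required, and it is easy to verify, since the $m$ degree-one vertices $c_1,\ldots,c_m$ force their neighbours $b_1,\ldots,b_m$ into any TD-set, each $b_i$ in turn requiring a further neighbour from the disjoint pair $\{a_i,c_i\}$, giving $\gamma_t(G_m)\ge 2m$, while $\{a_1,\ldots,a_m,b_1,\ldots,b_m\}$ realises equality. Consequently the denominator in the total domination quotient equals $\gamma_t(G_k)\,\gamma_t(G_n) = (2k)(2n) = 4kn$.

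Next I would simply divide the two-sided estimate $2kn+k\le \gamma_t(G_k\Box G_n)\le 2kn+2k$ from the Proposition by $4kn$. The lower bound gives $q_t(G_k,G_n)\ge \frac{2kn+k}{4kn} = \frac12 + \frac{1}{4n}$, and the upper bound gives $q_t(G_k,G_n)\le \frac{2kn+2k}{4kn} = \frac12 + \frac{1}{2n}$, which are exactly the two inequalities claimed. Note that the factor $k$ cancels in both fractions, which is why the resulting bounds depend on $n$ alone.

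Finally, for the limiting statement I would fix $k\ge 2$ and let $n\to\infty$ through integers $n\ge k$. Since both $\frac{1}{4n}$ and $\frac{1}{2n}$ tend to $0$, the squeeze theorem applied to the sandwich $\frac12+\frac{1}{4n}\le q_t(G_k,G_n)\le \frac12+\frac{1}{2n}$ yields $\lim_{n\to\infty} q_t(G_k,G_n) = \frac12$. There is no genuine obstacle here: all of the substantive work—establishing the matching lower and upper bounds on $\gamma_t(G_k\Box G_n)$—was already carried out in the Proposition, and the Corollary only repackages those bounds in terms of $q_t$ and passes to the limit. The one point meriting a moment's care is confirming that the common factor $k$ in numerator and denominator cancels, so that neither bound deteriorates as $k$ grows; this is precisely what makes the limit independent of $k$.
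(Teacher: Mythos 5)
Your proposal is correct and matches the paper's (implicit) argument: the corollary is stated without proof precisely because it follows by dividing the bounds $2kn+k\le\gamma_t(G_k\Box G_n)\le 2kn+2k$ of the preceding proposition by $\gamma_t(G_k)\gamma_t(G_n)=4kn$ and letting $n\to\infty$. Your verification that $\gamma_t(G_m)=2m$ is also the intended (and correct) justification of the denominator.
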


\barva{\section{Discussion on the total domination quotient}\label{sec:quot}}

We propose a further study of the quotient $q_t(G,H)$ for arbitrary graphs $G$ and $H$. In particular, it would be interesting to answer
the question about whether the quotient $q_t(G,H)$ equals $1/2$ only if one of the graphs is isomorphic to $K_2$. Note that the quotient can be arbitrarily large, as shown by $G$ and $H$ being complete graphs (in this case, $q_t(K_n,K_m) = \min\{m,n\}/4$). Moreover, recall the general bound $\gamma_t(G\Box H)\ge \rho_2(G)\gamma_t(H)$, cf.~\cite{bhr05}, where $\rho_2(G)$ denotes the $2$-packing number of a graph $G$, that is, the maximum number of pairwise disjoint closed neighborhoods of vertices in $G$. By this bound, we have that $q_t(G,H)\ge 1$ whenever $\gamma_t(G) = \rho_2(G)$.
Such graphs have been studied under the name $(\rho,\gamma_t)$-graphs and were
characterized by Dorfling et al.~\cite{dghm06}.

Next, we propose the following definition, in which $\cal G$ denotes the family of all connected graphs with no isolated vertices. Given a graph $G\in {\cal G}$, let $$q_t^{\rm{inf}}(G)=\inf_{H\in {\cal G}}\{q_t(G,H)\}\,.$$
That is, we want to express by this notion how close a graph $G\in \cal G$ can get to the bound from Theorem~\ref{thm:ho_ineq} when the other factor varies over all graphs in ${\cal G}$. For instance, by the above discussion, if $G$ is a $(\rho,\gamma_t)$-graph, then $q_t^{\rm{inf}}(G)\ge 1$. Clearly, if $G$ is one of the graphs from $\mathcal{F}_1 \cup \mathcal{F}_2\cup \mathcal{F}_3$, then $q_t^{\rm{inf}}(G)=1/2$ and the infimum is attained (hence, it is actually a minimum) when $K_2$ is chosen for $H$. Several questions naturally appear. For instance, is it true that for any graph $G$ there always exists a graph $H$ such that $q_t^{\rm{inf}}(G)=q_t(G,H)$? Note that the graphs $G_n$, for which we clearly have $q_t^{\rm{inf}}(G_n)=1/2$, all belong to the family $\mathcal{F}_1$, hence $q_t^{\rm{inf}}(G_n)=q_t(G_n,K_2)$.

We pose another question, which, if proven to have the positive answer, would considerably reduce the set of candidates $G$ and $H$ for which the equality $\gmtG \gmt (H) = 2 \gmtGH$ can hold.
\begin{question}
\label{q:1}
Is it true that for any connected graph $G$ with no isolated vertices we have $$q_t(G,H)\ge q_t(G,K_2),$$
where $H$ is an arbitrary connected graph with no isolated vertices?
In other words, is it true that $$\frac{\gmtGH}{\gmtH}\ge \frac{\gmt(\GcK)}{2},$$
for any graphs $G$ and $H$ in $\cal G$?
\end{question}
Note that the equality $\gmtG \gmt (H) = 2 \gmtGH$ holds if and only if $q_t(G,H)=1/2$, that is, if the total domination quotient of $G$ and $H$ attains the lower bound given by~\eqref{e2}. Hence, if the above question has affirmative answer, then the equality $\gmtG \gmt (H) = 2 \gmtGH$ implies $q_t(G,H)=q_t(G,K_2)=q_t(H,K_2)=1/2$. This in turn implies (by Theorem~\ref{thm:lu_hou_k2}) that both $G$ and $H$ belong to the family of graphs $\mathcal{F}_1 \cup \mathcal{F}_2\cup \mathcal{F}_3$. This would bring us closer to our suspicion that the pairs of graphs $G$ and $H$ from ${\cal G}$ such that $\gmtG \gmt (H) = 2 \gmtGH$ can only be found among the graphs from Theorem~\ref{thm:GH2} (in which case one of the factors is always $K_2$).

Furthermore, a positive answer to Question~\ref{q:1} would strengthen the bound of Ho from Theorem~\ref{thm:ho_ineq}, because reorganizing the inequality in Question~\ref{q:1} to $2{\gmtGH}\ge \gmt (\GcK){\gmtH}$ and using $\gmt(\GcK){\gmtH}\ge \gmtG\gmtH$, the truth of the first inequality implies $2{\gmtGH}\ge \gmtG\gmtH$. While we could not prove the inequality in Question~\ref{q:1}, we verified its truth by computer for all pairs of connected graphs $G$ and $H$ with no isolated vertices,
where $G$ has at most $8$ vertices and $H$ has at most $7$ vertices.

\section*{Acknowledgement}
We are grateful to anonymous reviewers for their helpful remarks and suggestions.

The research in this paper was supported in part by the Slovenian Research Agency (I0-0035, research programs P1-0285, P1-0297, research projects N1-0032, N1-0043, J1-5433, J1-6720, J1-6743, J1-7051, J1-7110 and two Young Researchers Grants).

\bibliographystyle{abbrv}

\end{document}